\newtheorem{tm}{Theorem}
\newtheorem{lemma}[tm]{Lemma}
\newtheorem{corollary}[tm]{Corollary}
\theoremstyle{definition}
\newtheorem*{definition}{Definition}
\theoremstyle{remark}
\newtheorem*{acknowledgments}{Acknowledgments}
\newtheorem*{notation}{Notation}
\newcommand{\Q}{\mathbb{Q}}
\newcommand{\C}{\mathbb{C}}
\newcommand{\Z}{\mathbb{Z}}
\newcommand{\F}{\mathbb{F}}
\newcommand{\PP}{\mathbb{P}}
\newcommand{\cC}{\mathcal{C}}
\newcommand{\cJ}{\mathcal{J}}
\newcommand{\OO}{\mathcal{O}}
\newcommand{\cX}{\mathcal{X}}
\newcommand{\fp}{\mathfrak{p}}
\newcommand{\fq}{\mathfrak{q}}
\newcommand{\X}{\mathrm{X}}
\newcommand{\Y}{\mathrm{Y}}
\newcommand{\tors}{\mathrm{tors}}
\newcommand\smallmat[4]{\bigl(\begin{smallmatrix} #1 & #2 \\
  #3 & #4 \end{smallmatrix}\bigr)}
\DeclareMathOperator{\Gal}{Gal}
\DeclareMathOperator{\GL}{GL}
\DeclareMathOperator{\id}{id}
\DeclareMathOperator{\Jac}{Jac}
\DeclareMathOperator{\lcm}{lcm}
\DeclareMathOperator{\SL}{SL}
\DeclareMathOperator{\Pic}{Pic}
\DeclareMathOperator{\rk}{rk}
\DeclareMathOperator{\Norm}{Nm}
\begin{document}

\title[A criterion to rule out torsion groups for elliptic curves]{A criterion to rule out torsion groups for elliptic curves over number fields}

\author{Peter Bruin}
\address{Mathematisch Instituut\\ Universiteit Leiden\\ Postbus 9512\\ 2300 RA \ Leiden\\ Netherlands}
\email{P.J.Bruin@math.leidenuniv.nl}
\author{Filip Najman}
\address{Department of Mathematics\\ University of Zagreb\\ Bijeni\v cka cesta 30\\ 10000 Zagreb\\ Croatia}
\email{fnajman@math.hr}
\address{Department of Mathematics, Massachusetts Institute of Technology, Cambridge, Massachusetts 02139, USA}
\email{fnajman@mit.edu}

\begin{abstract}
We present a criterion for proving that certain groups of the form $\Z/m\Z\oplus\Z/n\Z$ do not occur as the torsion subgroup of any elliptic curve over suitable (families of) number fields. We apply this criterion to eliminate certain groups as torsion groups of elliptic curves over cubic and quartic fields. We also use this criterion to give the list of all torsion groups of elliptic curves occurring over a specific cubic field and over a specific quartic field.
\end{abstract}

\maketitle

\section{Introduction}

A fundamental result in the theory of elliptic curves, the Mordell--Weil theorem, states that the Abelian group of points of an elliptic curve (or more generally an Abelian variety) $E$ over a number field $K$ is finitely generated. Thus, $E(K)$ is isomorphic to $E(K)_\tors\oplus\Z^r$, where $E(K)_\tors$ is the torsion subgroup of $E(K)$ and $r\geq 0$ is the rank of $E(K)$.

We will denote by $\Phi(d)$, where $d$ is a positive integer, the set of all the possible isomorphism types of $E(K)_\tors$, where $K$ runs through all number fields $K$ of degree~$d$ and $E$ runs through all elliptic curves over~$K$.  Similarly, for a fixed number field~$K$, we will denote by $\Phi(K)$ the set of all the possible isomorphism types of $E(K)_\tors$ where $E$ runs through all elliptic curves over this fixed field $K$. Obviously, if $K$ is a number field of degree~$d$, then $\Phi(K)\subseteq \Phi(d)$, and $\Phi(d)$ is the union of the $\Phi(K)$ with $K$ running over all number fields of degree~$d$.
It is interesting to determine the set $\Phi(d)$ for fixed integers~$d$, as well as the set $\Phi(K)\subseteq\Phi(d)$ for fixed number fields~$K$ of degree~$d$.

Let $C_m$ be a cyclic group of order $m$. Mazur's torsion theorem \cite{maz1,maz2} tells us that $\Phi(\Q) = \Phi(1)$ consists of the following 15 groups:
\begin{equation}
\label{rac}
\begin{aligned}
&C_m,  &  m&=1,\ldots, 10, 12,\\
& C_2 \oplus C_{2m}, & m&=1,\ldots, 4.\\
\end{aligned}
\end{equation}
Similarly, by a theorem of Kamienny \cite[Theorem~3.1]{Kam1} and Kenku and Momose \cite[Theorem~(0.1)]{km}, $\Phi(2)$ consists of the following 26 groups:
\begin{equation}
\label{kvad}
\begin{aligned}
&C_m,  &  m&=1,\ldots, 16, 18,\\
& C_2 \oplus C_{2m}, & m&=1,\ldots, 6,\\
& C_3 \oplus C_{3m}, &  m&=1,2,\\
&C_4 \oplus C_4.
\end{aligned}
\end{equation}
Over cubic fields, we know that if a point on an elliptic curve has prime order $p$, then $p\leq 13$ \cite{Par1,Par2}, we know all the isomorphism types in $\Phi(3)$ that appear as the torsion groups of infinitely many non-isomorphic elliptic curves \cite{jks}, and we know that this list is strictly smaller than $\Phi(3)$ \cite{naj4}, as opposed to what happens in the rational and quadratic cases.
Unfortunately, $\Phi(d)$ has as yet not been determined for any $d\geq 3$, although Maarten Derickx (personal communication) has told us that the computation of $\Phi(3)$ should be within reach.

As for results on $\Phi(K)$ for specific $K$, Mazur \cite{maz1} determined $\Phi(\Q) = \Phi(1)$, the second author determined $\Phi(\Q(\zeta_3))$ and $\Phi(\Q(\zeta_4))$ \cite{naj0}, and methods of determining $\Phi(K)$ for other quadratic fields $K$ were given by Kamienny and the second author \cite{kn}.
The second author also tried to determine $\Phi(K)$ for certain cubic fields $K$ with small discriminant, but managed to obtain only partial results \cite{naj3}.


In this paper we develop a criterion, based on a careful study of the cusps of modular curves $X_1(m,n)$, which can tell us that certain groups do not occur as torsion groups of elliptic curves over a number field $K$.
This criterion is essentially a generalization of a criterion of Kamienny \cite{Kam3}.
Kamienny showed that for certain $n$, the curve $X_1(n)$ cannot have non-cuspidal points over an extension of degree~$d$ of~$\Q$, where $d$ is less than the gonality of $X_1(n)$, as  points of degree~$d$ on $X_1(n)$ would force functions of a smaller degree than the gonality to exist, which is impossible. 

We generalize Kamienny's approach both by using the modular curves $X_1(m,n)$ instead of $X_1(n)$ and by viewing the number fields $K$ as extensions of a suitable subfield~$L$ of $\Q(\zeta_m)$.
This generalization is somewhat technical (for example, it requires a careful consideration of the fields of definitions of cusps), but gives us more flexibility in ruling out torsion groups of the form $C_m \oplus C_n$.
Our criterion, on its own or in combination with other techniques, allows us to advance our understanding of the torsion groups of elliptic curves over~$K$, both when $K$ is a fixed number field and when $K$ runs through all number fields of degree $d$.
In particular, we make progress in determining $\Phi(3)$ and $\Phi(4)$ by ruling out a number of possibilities for torsion groups of elliptic curves over cubic and quartic fields.
As for determining $\Phi(K)$ for a fixed cubic or quartic field~$K$, a natural choice for a quartic field~$K$, in view of \cite{naj0}, is the `next' cyclotomic field, $\Q(\zeta_5)$.
Since there are no cubic cyclotomic fields, we choose the cyclic cubic field $\Q(\zeta_{13}+\zeta_{13}^5+\zeta_{13}^8+\zeta_{13}^{12})$.

In Section~\ref{sec:thm}, we state and prove our main results (Theorem~\ref{thm:main} and Corollary~\ref{cor}).
In Sections \ref{sec:cubic13} and~\ref{sec:quartic5}, we use Theorem~\ref{thm:main} to determine $\Phi(\Q(\zeta_{13}+\zeta_{13}^5+\zeta_{13}^8+\zeta_{13}^{12}))$ and $\Phi(\Q(\zeta_5))$, achieving to our knowledge the first determination of $\Phi(K)$ for a cubic and a quartic field, respectively.
In Sections \ref{sec:cubicgeneral} and~\ref{sec:quarticgeneral}, we use Corollary~\ref{cor}, together with other techniques, to prove that a large number of finite groups do not occur as torsion groups of elliptic curves over cubic and quartic fields, respectively.

The computer calculations were done using Magma \cite{mag}. Showing that the rank of Jacobians over $\Q$ is 0, unless otherwise mentioned, has been done by showing that the $L$-function of (the factors of) $J$ is non-zero.
By results of Kato \cite{kato}, the Birch-Swinnerton--Dyer conjecture is true for quotients of modular Jacobians, so this computation unconditionally proves that the rank is 0.

\section{Main results}

\label{sec:thm}

\begin{notation}
If $K$ is a number field, $\OO_K$ denotes its ring of integers.  If $\fp$ is a prime ideal of~$\OO_K$, we write $k(\fp)$ for the residue field $\OO_K/\fp$, and $\Norm(\fp)=\#k(\fp)$.  Furthermore, we denote by $\OO_{K,\fp}$ the localization of~$\OO_K$ at~$\fp$.
\end{notation}

\begin{definition}
Let $m$ and~$n$ be positive integers with $m\mid n$.
Let $L$ be a subfield of $\Q(\zeta_m)$, and let $\fp_0$ be a prime of~$L$.
Let $X$ be the curve $X_1(m,n)_{\Q(\zeta_m)}$ viewed as a (proper, smooth, but possibly geometrically disconnected) curve over~$L$.
We consider triples $(\cX,\cX',\pi)$, where
\begin{itemize}
\item $\cX$ is a flat, proper model of~$X$ over $\OO_{L,\fp_0}$ such that the $j$-invariant extends to a map $j\colon\cX\to\PP^1_{\OO_{L,\fp_0}}$;
\item $\cX'$ is a flat, proper and regular curve over $\OO_{L,\fp_0}$
such that the curve $X'=\cX'_L$ over~$L$ is geometrically connected;
\item $\pi\colon\cX\to\cX'$ is a proper and generically finite map of $\OO_{L,\fp_0}$-schemes.
\end{itemize}
Given such a triple $(\cX,\cX',\pi)$, we write $\cC$ for the topological inverse image of the section~$\infty$ under the map $j\colon\cX\to\PP^1_{\OO_{L,\fp_0}}$, and $\cC'$ for the topological image of~$\cC$ under~$\pi$ equipped with the reduced induced subscheme structure.
With this notation, we call $(\cX,\cX',\pi)$ a \emph{nice $(L,\fp_0)$-quotient} of $X_1(m,n)$ if the following conditions are satisfied:
\begin{enumerate}
\item The scheme $\cC'$ is normal and lies in the smooth locus of\/~$\cX'$ over~$\OO_{L,\fp_0}$.
\item The image of the open subscheme $\cX\setminus\cC$ under~$\pi$ equals $\cX'\setminus\cC'$.
\end{enumerate}
\end{definition}


\begin{tm}
\label{thm:main}
Let $A$ be a group of the form $C_m\oplus C_n$ with $m\mid n$, let $K$ and~$L$ be number fields with $L\subseteq \Q(\zeta_m)\subseteq K$, and let $d=[K:L]$.  Let $\fp_0$ be a prime of~$L$, let $p$ be the residue characteristic of\/~$\fp_0$, and let $e$ be the largest absolute ramification index of a prime of~$K$ dividing $\fp_0$.  Let $S_{K,\fp_0}$ be the set
$$
S_{K,\fp_0} = \{\delta\ge1\mid \delta\text{ divides $[k(\fp):k(\fp_0)]$ for some prime $\fp$ of~$K$ over~$\fp_0$}\}.
$$
Let
$$
A'=\begin{cases}
  A & \text{ if }p>e+1\\
  \text{maximal $p$-divisible subgroup of }A & \text{ if }p\leq e+1.
\end{cases}
$$
Let $(\cX,\cX',\pi)$ be a nice $(L,\fp_0)$-quotient of $X_1(m,n)$.
Let $X'=\cX'_L$, and let $J'$ be the Jacobian of~$X'$.
Let $h$ be the least common multiple of the ramification indices $e(\fq/\fp_0)$ where $L(C)$ is the function field of an irreducible component~$C$ of\/~$\cC'$ and $\fq$ is a prime of $L(C)$ over~$\fp_0$.
Assume that the following conditions are satisfied:
\begin{itemize}
\item[i)] The gonality of $X'$ over~$L$ is at least $dh+1$.
\item[ii)] The group $J'(L)$ has rank $0$.
\item[iii)] If $p=2$, then the $2$-torsion subgroup of $J'(L)$ is trivial.
\item[iv)] For all primes $\fp\mid\fp_0$ of~$K$, there does not exist an elliptic curve over~$k(\fp)$ with a subgroup isomorphic to $A'$.
\item[v)] For all primes $\fp\mid\fp_0$ of~$K$, neither $3\Norm(\fp)$ nor $4\Norm(\fp)$ is divisible by~$\#A'$.
\item[vi)] For all irreducible components $C$ of\/~$\cC'$, if the function field $L(C)$ has a prime $\fq$ over~$\fp_0$ such that $[k(\fq):k(\fp_0)]$ is in $S_{K,\fp_0}$, then $\fq$ is the \emph{unique} prime of~$L(C)$ over~$\fp_0$.
\end{itemize}
Then there does not exist an elliptic curve over~$K$ with a subgroup isomorphic to~$A$.
\end{tm}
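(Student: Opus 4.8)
The plan is to argue by contradiction in the style of Kamienny and Mazur. Suppose $E/K$ carries a subgroup isomorphic to $A=C_m\oplus C_n$. Since $E[m]\subseteq E(K)$, the Weil pairing forces $\Q(\zeta_m)\subseteq K$, in agreement with the hypotheses, and the pair (elliptic curve, level structure) defines a $K$-point of $X_1(m,n)$; regarding $X_1(m,n)_{\Q(\zeta_m)}$ as the $L$-curve $X$, this is a non-cuspidal closed point $x$ with $k(x)\subseteq K$, so $[k(x):L]\mid d$. As $(\cX,\cX',\pi)$ is a nice quotient one has $\pi^{-1}(\cC')=\cC$; hence $x':=\pi(x)$ is a non-cuspidal closed point of $X'$ with $\deg x':=[k(x'):L]$ a divisor of $d$. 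Using that $\cX'$ is regular and proper over $\OO_{L,\fp_0}$, take the Zariski closure of $x'$ in $\cX'$ and cut with the special fibre to obtain an effective $k(\fp_0)$-rational divisor $\overline{x'}$ on $\cX'_{k(\fp_0)}$ of degree $\deg x'\le d$. I then split into two cases, according to whether $E$ has potentially good reduction (i.e.\ $v_\fp(j(E))\ge0$) at some prime $\fp\mid\fp_0$ of $K$, or potentially multiplicative reduction at all of them; the reduction of $x$ on $\cX$ along a prime $\fq\mid\fp_0$ of $k(x)$ lies in $\cC$ exactly in the potentially multiplicative case.

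In the potentially good case, set $\fq=\fp\cap k(x)$, so $v_\fq(j(E))\ge0$; by the valuative criterion $x$ extends to an $\OO_{k(x),\fq}$-point of $X_1(m,n)$ landing in the non-cuspidal locus, which yields an elliptic curve over $\OO_{k(x),\fq}$ carrying the $A$-level structure. Its reduction is an elliptic curve over $k(\fq)$ into whose group of points $A'$ injects: the prime-to-$p$ part of $A$ always injects under reduction, and the full $p$-part injects when $p>e+1$, because the absolute ramification index of $\fq$ is then at most $e\le p-2$ --- this is exactly the case distinction defining $A'$. Base-changing to $k(\fp)\supseteq k(\fq)$ produces an elliptic curve over $k(\fp)$ with a subgroup isomorphic to $A'$, contradicting~(iv); alternatively its number of $k(\fp)$-points, being divisible by $\#A'$, violates the Hasse bound, which is~(v). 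So this case is impossible.

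In the potentially multiplicative case, $j(E)$ reduces to $\infty$ along every prime above $\fp_0$, so the reduction of $x$ is supported on $\cC$, whence $\overline{x'}=\pi(\overline x)$ is supported on $\cC'_{k(\fp_0)}$. Every point of $\operatorname{supp}(\overline{x'})$ lies on some irreducible component $C$ of $\cC'$, along a prime $\fq$ of $L(C)$ above $\fp_0$ with $[k(\fq):k(\fp_0)]\in S_{K,\fp_0}$ (these residue degrees divide residue degrees of primes of $k(x')\subseteq K$ above $\fp_0$). By~(vi) each such $\fq$ is the unique prime of $L(C)$ above $\fp_0$, and by condition~(1) it lies in the smooth locus of $\cX'$; hence the generic cusp $P_C\in X'$ attached to $C$, a closed point of degree $[L(C):L]=e(\fq/\fp_0)\,[k(\fq):k(\fp_0)]$, reduces to exactly $e(\fq/\fp_0)$ times the point $\operatorname{Spec}k(\fq)$. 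Clearing the ramification indices by $h$, one assembles an effective $L$-rational divisor $D$ on $X'$, supported on cusps, of degree $h\deg x'\le dh$, with $\overline D=\overline{h\,x'}$. Now $[D]-h[x']\in J'(L)$ lies in the kernel of reduction at $\fp_0$ (via the regular model $\cX'$); by~(ii) it is torsion, and reduction is injective on the torsion of $J'(L)$ --- automatically on the prime-to-$p$ part, and on the $p$-part by the ramification hypothesis, with~(iii) handling $p=2$ --- so $[D]=h[x']$, i.e.\ $D\sim h\,x'$ over $L$. But $D$ and $h\,x'$ are effective of degree at most $dh$, which by~(i) is strictly below the gonality of $X'$ over $L$, and two linearly equivalent effective divisors of degree below the gonality must coincide (else the associated pencil gives a map $X'\to\PP^1$ of too small degree). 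Thus $D=h\,x'$ as divisors --- absurd, since $D$ is supported on cusps while $h\,x'$ is supported on the non-cuspidal point $x'$. This contradiction proves the theorem.

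The main obstacle is the potentially multiplicative case: one has to keep careful track of the fields of definition --- $K$, $L\subseteq\Q(\zeta_m)$, the residue fields $k(\fp)$, and the cuspidal function fields $L(C)$ --- to verify that every point of $\operatorname{supp}(\overline{x'})$ lies over a prime to which hypothesis~(vi) applies, and then to build $D$ with the right multiplicities; this is precisely where conditions~(1), (vi) and the integer $h$ are needed. By comparison, the finite-field input behind~(iv)--(v) and the injectivity of reduction on torsion behind~(ii)--(iii) are routine.
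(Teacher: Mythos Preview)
Your overall strategy matches the paper's, and your treatment of the ``potentially multiplicative'' case is essentially the paper's main argument: build a cuspidal divisor $D$ with the same specialization as $h\cdot x'$, use (ii)--(iii) together with Katz to deduce linear equivalence over~$L$, then invoke the gonality bound~(i) to force equality of divisors, contradicting that $x'$ is non-cuspidal.

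The gap is in your ``potentially good'' case. You assert that an $\OO_{k(x),\fq}$-point landing in the non-cuspidal locus ``yields an elliptic curve over $\OO_{k(x),\fq}$ carrying the $A$-level structure''. But the definition of a nice $(L,\fp_0)$-quotient only asks that $\cX$ be a flat proper model on which the $j$-map extends; it is \emph{not} assumed to be a fine moduli scheme, so nothing guarantees that a non-cuspidal integral point corresponds to an elliptic curve with level structure over the base (and when $p\mid n$ no smooth proper moduli model need exist). In particular your argument does not exclude the possibility that $E$ has \emph{additive} reduction at~$\fp$ with $j(E)$ integral. Relatedly, you misread condition~(v): it is not a Hasse-type bound --- any Hasse obstruction is already subsumed in~(iv) --- but precisely the hypothesis needed to eliminate additive reduction.

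The paper handles this step without touching $\cX$ at all, via the N\'eron model of~$E$ (the Lemma preceding the main proof). The reduction map $E(K)\to\tilde E_\fp(k(\fp))$ is injective on the image of~$A'$ by \cite[Appendix]{kat} and the very definition of~$A'$, so $\tilde E_\fp(k(\fp))$ contains a copy of~$A'$. Good reduction is then excluded by~(iv). Under additive reduction, $\tilde E_\fp(k(\fp))$ is an extension of a group of order at most~$4$ by the additive group of~$k(\fp)$, forcing $\#A'$ to divide $3\Norm(\fp)$ or $4\Norm(\fp)$, which contradicts~(v). Hence $E$ has genuinely multiplicative reduction at every prime $\fp\mid\fp_0$, and one is immediately in your Case~2.
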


We will prove Theorem~\ref{thm:main} below; we begin with an auxiliary result.

\begin{lemma}
\label{lemma:mult}
Let $A$, $K$, $L$ and $\fp_0$ be as in Theorem~\ref{thm:main}.  Under the conditions iv) and~v) of Theorem~\ref{thm:main}, any elliptic curve $E$ over~$K$ equipped with an embedding $\iota\colon A\rightarrowtail E(K)$ has multiplicative reduction at all primes of~$K$ lying over~$\fp_0$.
\end{lemma}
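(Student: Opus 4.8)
The plan is to show that if $E$ has good or additive reduction at some prime $\fp\mid\fp_0$, then the embedding $\iota\colon A\rightarrowtail E(K)$ produces a subgroup that cannot fit inside the relevant group of the reduction, contradicting either condition iv) or condition v). The key tool is the behaviour of torsion under reduction: for a prime $\fp$ of~$K$ over~$\fp_0$ with residue characteristic~$p$ and absolute ramification index $e(\fp)$, if $E$ has good reduction at~$\fp$ then the reduction map $E(K)_\tors\to\widetilde E(k(\fp))$ is injective on prime-to-$p$ torsion, and more precisely is injective on all of the $\ell$-primary part for any prime $\ell\ne p$; the $p$-primary part can only have a kernel when $p\le e(\fp)+1$, and even then the image is a $p$-divisible-type restriction. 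This is why the group~$A'$ appears: when $p>e+1$ the whole of $A$ injects into $\widetilde E(k(\fp))$, and when $p\le e+1$ at least the maximal $p$-divisible subgroup $A'$ injects (the prime-to-$p$ part always injects, and the $p$-part that survives is contained in a $p$-divisible piece).

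First I would reduce to the good/additive dichotomy: since multiplicative reduction is what we want to prove, assume $E$ has potentially good reduction at some $\fp\mid\fp_0$ (additive or good), and argue in each case. In the good reduction case: the specialization $\iota(A')\hookrightarrow E(K)_\tors$ reduces injectively into $\widetilde E(k(\fp))$, so $\widetilde E(k(\fp))$ contains a subgroup isomorphic to $A'$, directly contradicting iv). In the additive reduction case: I would pass to the connected component of the Néron model, using that the group of components has order bounded independently of $A$ in a way that forces most of the torsion into the formal group or the identity component; more concretely, for additive reduction the prime-to-$p$ torsion of $E(K)$ injects into $\widetilde E^0(k(\fp))$, whose order divides $\Norm(\fp)$ for additive reduction — actually the key classical fact is that $\#\widetilde E^0(k(\fp))$ is $\Norm(\fp)$ times a unit in the additive case, or rather one uses that the prime-to-$p$ part of $E(K)_\tors$ embeds into a group of order dividing $3\Norm(\fp)$ or $4\Norm(\fp)$ (the component group in the additive case has order dividing $4$, and in fact the prime-to-$p$ torsion injects into $\widetilde E^{\mathrm{ns}}(k(\fp))$, of order $\Norm(\fp)$ up to the component group). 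This forces $\#A'$ (or its prime-to-$p$ part, which equals $\#A'$ when $p\le e+1$ since then $A'$ is $p$-divisible hence has prime-to-$p$ order away from... — here I would be careful) to divide $3\Norm(\fp)$ or $4\Norm(\fp)$, contradicting v).

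The main obstacle will be bookkeeping the $p$-primary part carefully in the additive case and making sure the statement of v) with the factors $3$ and $4$ is exactly what the Kodaira-type classification of additive reduction gives: the non-identity component group of the special fibre of the Néron model has order dividing $4$ for most Kodaira types but one must also handle the types $\mathrm{IV}, \mathrm{IV}^*$ (component group of order~$3$) and the potentially-good-but-additive types, which is precisely why both $3$ and $4$ appear. I would cite the standard reduction-injectivity lemmas (e.g. from Silverman's \emph{Arithmetic of Elliptic Curves}, the appendix on reduction, or Katz's results on torsion and reduction) for the injectivity statements, and the Kodaira/Néron classification for the component-group bounds. Assembling these, in every case of non-multiplicative reduction we contradict iv) or v), so $E$ must have multiplicative reduction at every prime of~$K$ over~$\fp_0$, which is the claim.
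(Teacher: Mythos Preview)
Your approach is essentially the paper's, but the additive case can be handled much more cleanly, and your hedging there hides a small gap. The paper does not split into prime-to-$p$ and $p$-primary parts at all: by Katz's result \cite[Appendix]{kat} together with the definition of~$A'$, the reduction map $E(K)\to\tilde E_\fp(k(\fp))$ to the special fibre of the N\'eron model is injective on~$\iota(A')$ in \emph{both} the good and the additive case, so $\tilde E_\fp(k(\fp))$ contains a copy of~$A'$ regardless of reduction type. In the additive case the Kodaira--N\'eron classification gives $\tilde E_\fp(k(\fp))\cong k(\fp)^+\times\Phi$ with $\#\Phi\le4$, so $\#A'$ divides $c\cdot\Norm(\fp)$ for some $c\in\{1,2,3,4\}$, hence divides $3\Norm(\fp)$ or $4\Norm(\fp)$, contradicting~v). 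Your restriction to prime-to-$p$ torsion would leave the case $p>e+1$ with $A'=A$ having nontrivial $p$-part unfinished; using the full injectivity of~$A'$ avoids this bookkeeping entirely. (Also, a small slip: ``potentially good'' is not the right dichotomy --- additive reduction can be potentially multiplicative --- but your actual case split into good/additive is the correct one.)
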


\begin{proof}
Let $\fp$ be a prime of~$K$ over~$\fp_0$.  By $\tilde E_\fp$ we denote the reduction of~$E$ modulo~$\fp$, i.e.\ the special fibre of the N\'eron model of~$E$ at~$\fp$.  Then we have a reduction map
$$
E(K)\to\tilde E_\fp(k(\fp)).
$$
This map is injective on~$\iota(A')$ by \cite[Appendix]{kat} and the definition of~$A'$.  The group $\tilde E_\fp(k(\fp))$ therefore contains a subgroup isomorphic to~$A'$.

By assumption iv), $E$ does not have good reduction at~$\fp$. If $E$ had additive reduction, then by the Kodaira--N\'eron classification \cite[Appendix~C, \S15]{sil}, $\tilde E_\fp(k(\fp))$ would be a product of the additive group of~$k(\fp)$ and a group of order $\leq 4$, contradicting assumption v).  We conclude that $E$ has multiplicative reduction at~$\fp$.
\end{proof}

\begin{proof}[Proof of Theorem~\ref{thm:main}]
Let $\cX_{\fp_0}$ and $\cX'_{\fp_0}$ be the special fibres of $\cX$ and $\cX'$ over~$\fp_0$.
Let $\cJ'$ be the N\'eron model of~$J'$ over~$\OO_{L,\fp_0}$.
It is known \cite[\S9.5, Theorem~4]{blr} that $\cJ'$ represents the functor $P/E$, where $P$ is the open subfunctor of $\Pic_{\cX'/\OO_{L,\fp_0}}$ given by line bundles of total degree~$0$ and $E$ is the schematic closure in~$P$ of the unit section in $P(L)$.
We have a commutative diagram
$$
\xymatrix{
& P(\OO_{L,\fp_0}) \ar[r] \ar[d] & P(k(\fp_0)) \ar[d] \\
J'(L) & \cJ'(\OO_{L,\fp_0}) \ar[r] \ar@{=}[l] & \cJ'(k(\fp_0))\rlap.}
$$
By assumptions ii) and iii) and \cite[Appendix]{kat}, the bottom horizontal map is injective.  

Suppose the theorem is false.
Let $E$ be an elliptic curve over~$K$ equipped with an embedding $A\rightarrowtail E(K)$.
These data determine a point of $X(K)$ whose Zariski closure is a prime divisor $D$ on~$\cX$.
Let $D_{\fp_0}$ be the schematic intersection of the divisor~$D$ with~$\cX_{\fp_0}$, and let $(\pi_* D)_{\fp_0}$ be the schematic intersection of $\pi_* D$ with~$\cX'_{\fp_0}$.
By Lemma~\ref{lemma:mult}, $E$ has multiplicative reduction at all primes of~$K$ over~$\fp_0$, so the support of~$D_{\fp_0}$ is contained in~$\cC$.
Let $Z$ be the support of $(\pi_* D)_{\fp_0}$; then the definition of~$\cC'$ implies that $Z$ is contained in~$\cC'$.
We can write $(\pi_* D)_{\fp_0}$ as a linear combination $\sum_{z\in Z} n_z z$, where the $n_z$ are positive integers.

Let $z$ be a point of~$Z$.
Since $(\cX,\cX',\pi)$ is a nice $(L,\fp_0)$-quotient, there is a unique irreducible component~$C_z$ of~$\cC'$ containing $z$, and the coordinate ring of~$C_z$ is the integral closure of $\OO_{L,\fp_0}$ in the function field $L(C_z)$ of~$C_z$.
Hence $k(z)$ can be identified with the residue field $k(\fq_z)$ of some prime~$\fq_z$ of $L(C_z)$ over~$\fp_0$; in particular, $[k(\fq_z):k(\fp_0)]$ equals $[k(z):k(\fp_0)]$.
On the other hand, $k(z)$ can also be identified with a subfield of the residue field $k(\fp_z)$ of some prime~$\fp_z$ of~$K$ over~$\fp_0$, so $[k(z):k(\fp_0)]$ divides $[k(\fp_z):k(\fp_0)]$.
This implies that $[k(\fq_z):k(\fp_0)]=[k(z):k(\fp_0)]$ is in $S_{K,\fp_0}$.
By assumption~vi), $\fq_z$ is the only prime of $L(C_z)$ over~$\fp_0$.
This implies that the schematic intersection of $C_z$ with~$\cX'_{\fp_0}$ equals $e_z z$, where $e_z=e(\fq_z/\fp_0)$.
We note that $e_z$ divides $h$.

We consider the effective divisor $D'$ on~$\cX'$ defined by
$$
D'=\sum_z \frac{n_z h}{e_z} C_z.
$$
By the above description of the intersections of~$D$ and the $C_z$ with~$\cX'_{\fp_0}$, the divisor $h\pi_* D - D'$ on~$\cX'$ specializes to the zero divisor on~$\cX'_{\fp_0}$.
This implies that the class of $h\pi_* D - D'$ in $P(k(\fp_0))$ is zero.
By the commutativity of the above diagram and the injectivity of the bottom map, the class of $h\pi_* D - D'$ in $J'(L)$ is also zero.
By assumption i), we conclude that the divisors $h\pi_* D$ and~$D'$ are equal.
The generic fibre of $D$ is supported outside $\cC$; since $(\cX,\cX',\pi)$ is a nice $(L,\fp_0)$-quotient, the generic fibre of~$\pi_* D$ is supported outside $\cC'$.
On the other hand, the generic fibre of~$D'$ is supported on $\cC'$, a contradiction.
\end{proof}

The following corollary of Theorem \ref{thm:main} is useful for eliminating groups from $\Phi(d)$.

\begin{corollary}
\label{cor}
Let $A$ be a group of the form $C_m\oplus C_n$ with $m\mid n$, let $d\geq 1$ be an integer, and let $L$ be a subfield of\/ $\Q(\zeta_m)$. Let $\fp_0$ be a prime of~$L$, let $p$ be the residue characteristic of~$\fp_0$, and let $q=\Norm(\fp_0)$.
Let
$$
S_{\fp_0} = \left\{\delta\ge1\biggm|
\vcenter{\hbox{$K$ is an extension of\/ $\Q(\zeta_m)$ with $[K:L]=d$,}
\hbox{$\fp$ is a prime of $K$ over $\fp_0$, and $\delta$ divides $[k(\fp):k(\fp_0)]$}}
\right\}.
$$
Let
$$
A'=\begin{cases}
  A & \text{ if }p>d+1\\
  \text{maximal $p$-divisible subgroup of }A & \text{ if }p\leq d+1.
\end{cases}
$$
Let $(\cX,\cX',\pi)$ be a nice $(L,\fp_0)$-quotient of $X_1(m,n)$.
Let $X'=\cX'_L$, and let $J'$ be the Jacobian of~$X'$.
Let $h$ be the least common multiple of the ramification indices $e(\fq/\fp_0)$ where $L(C)$ is the function field of an irreducible component~$C$ of\/~$\cC'$ and $\fq$ is a prime of $L(C)$ over~$\fp_0$.
Assume that the following conditions are satisfied:
\begin{itemize}
\item[i)] The gonality of $X'$ over~$L$ is at least $dh+1$.
\item[ii)] The group $J'(L)$ has rank~$0$.
\item[iii)] If $p=2$, then the $2$-torsion subgroup of $J'(L)$ is trivial.
\item[iv)] For all $i\in S_{\fp_0}$, there does not exist an elliptic curve over a field of $q^i$ elements with a subgroup isomorphic to $A'$.
\item[v)] Neither $3q^d$ nor $4q^d$ is divisible by~$\#A'$.
\item[vi)] For all irreducible components $C$ of\/~$\cC'$, if the function field $L(C)$ has a prime $\fq$ over~$\fp_0$ such that $[k(\fq):k(\fp_0)]$ is in~$S_{\fp_0}$, then $\fq$ is the \emph{unique} prime of~$L(C)$ over~$\fp_0$.
\end{itemize}
Then there does not exist an elliptic curve over an extension of degree~$d$ of\/ $L$ with a subgroup isomorphic to~$A$.
\end{corollary}

\begin{proof}
Under the conditions of the corollary, the conditions of Theorem~\ref{thm:main} are satisfied for every extension~$K$ of degree~$d$ over $L$ such that $L\subseteq \Q(\zeta_m) \subseteq K$.
\end{proof}

We end this section with some remarks on checking the conditions of Theorem~\ref{thm:main} and Corollary~\ref{cor}.
The conditions are straightforward to check in practice, apart from condition ii) if $L\neq \Q$. 	
An easy way to make sure that condition vi) holds is to choose $\fp_0$ totally inert in $\Q(\zeta_n)$.

An important special case occurs when $L$ equals $\Q(\zeta_m)$, the prime $\fp_0$ does not divide~$n$, the curve $X'$ equals $X$ and $\pi$ is the identity on~$X$.
In this case the conditions simplify as follows: $(X,X',\pi)$ automatically extends to a nice $(L,\fp_0)$-quotient, and we have $A'=A$ and $h=1$.
Moreover, the following remarks are useful to check condition vi) in these cases.

Let $r$ be a divisor of~$n$.  The cusps of $X_1(m,n)$ represented by points $(a:b)\in\PP^1(\Q)$, where $a$, $b$ are coprime integers with $\gcd(b,n)=r$, all have the same field of definition, which we denote by $F_{m,n,r}$.  
By generalities on cusps and by the existence of the Weil pairing, we have $\Q(\zeta_m) \subseteq F_{m,n,r} \subseteq \Q(\zeta_n)$.
Explicitly, the field $F_{m,n,r}$ can be described as follows.
We consider the subgroups $H_{m,n,r}^0\subseteq H_{m,n,r}\subseteq G_{m,n}\subseteq(\Z/n\Z)^\times$ defined by
$$
\begin{aligned}
G_{m,n} &= \{s\in(\Z/n\Z)^\times\mid s\equiv 1 \pmod m\},\\
H_{m,n,r}^0 &= \{s\in(\Z/n\Z)^\times\mid s\equiv 1 \pmod{\lcm(m,n/r)}\},\\
H_{m,n,r} &= \begin{cases}
H_{m,n,r}^0 & \text{if } \gcd(mr,n) > 2,\\
H_{m,n,r}^0 \cdot \{\pm1\} & \text{if } \gcd(mr,n)\le 2.
\end{cases}
\end{aligned}
$$
(Note that in the latter case $m$ is 1 or~2, so $-1$ is in $G_{m,n}$.)
Using the canonical identification of $\Gal(\Q(\zeta_n)/\Q)$ with $(\Z/n\Z)^\times$, the field $F_{m,n,r}$ is then the field of invariants of $H_{m,n,r}$ acting on~$\Q(\zeta_n)$.
In the case $m=1$, we have
$$
F_{1,n,r}=\begin{cases}
\Q(\zeta_{n/r})   & \text{if } r > 2,\\
\Q(\zeta_{n/r})^+ & \text{if } r\le 2.\\
\end{cases}
$$
Condition vi) can be checked by computing a defining polynomial for $F_{m,n,r}$ over~$L$ and factoring it modulo~$\fp_0$.

\section{Torsion groups of elliptic curves over $\Q(\zeta_{13}+\zeta_{13}^5+\zeta_{13}^8+\zeta_{13}^{12})$}

\label{sec:cubic13}

In this section, we consider the cyclic cubic field
$$
K=\Q(\zeta_{13}+\zeta_{13}^5+\zeta_{13}^8+\zeta_{13}^{12})
$$
of discriminant~$13^2$.  This field can be written as $K=\Q(\omega)$ where $\omega$ has minimal polynomial $x^3 + x^2 - 4x + 1$.

\begin{tm}
\label{tm:cubic}
For every elliptic curve $E$ over~$K$, the torsion group $E(K)_\tors$ is one of the groups from Mazur's torsion theorem.
\end{tm}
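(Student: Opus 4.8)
The plan is to show that every group in the list $\Phi(3)$ of potential torsion groups over cubic fields, with the exception of the fifteen groups in Mazur's theorem, fails to occur over this particular field $K$. Since $K$ is a cyclic cubic field, $[K:\Q]=3$, and by the results of Parent and others quoted in the introduction, a point of prime order $p$ on an elliptic curve over a cubic field satisfies $p\le 13$; combined with the analysis of which $C_m\oplus C_n$ can occur over cubic fields, the set $\Phi(3)$ — and hence $\Phi(K)$ — is contained in an explicit finite list. Concretely, the extra candidates beyond Mazur's list that could a priori lie in $\Phi(3)$ are $C_{11}$, $C_{13}$, $C_{14}$, $C_{15}$, $C_{18}$, $C_{21}$, and $C_2\oplus C_{10}$, $C_2\oplus C_{12}$, $C_2\oplus C_{14}$, $C_3\oplus C_3$, $C_3\oplus C_6$, together with possibly $C_{16}$, $C_{20}$ and a few others depending on the exact bookkeeping. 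For each such group $A$ I must rule it out over $K$ specifically.

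The main tool is Theorem~\ref{thm:main}. For each candidate group $A=C_m\oplus C_n$ one chooses: a subfield $L\subseteq\Q(\zeta_m)$ with $L\subseteq\Q(\zeta_m)\subseteq K$ (so in most cases, where $m=1$ or $m=3$ and $\Q(\zeta_3)\not\subseteq K$, one is forced to take $L=\Q$ and $m=1$, i.e.\ to work with $X_1(n)$), a prime $\fp_0$ of $L$ — typically a rational prime $p$ that is inert or splits in a controlled way in $K$, with $p$ chosen so that $p>e+1$ (so $A'=A$) and so that $\fp_0$ is totally inert in $\Q(\zeta_n)$, which by the remark after Corollary~\ref{cor} forces condition vi) and makes $h=1$ — and a nice $(L,\fp_0)$-quotient $(\cX,\cX',\pi)$ of $X_1(m,n)$. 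When $m=1$ and $\fp_0\nmid n$ one may simply take $\cX'=\cX=X_1(n)$, $\pi=\id$, so that $X'=X_1(n)$ and $J'=J_1(n)$. One then verifies: (i) the gonality of $X_1(n)$ over $\Q$ exceeds $dh=3$, i.e.\ $\ge 4$, which holds for $n$ in the relevant range (e.g.\ $X_1(n)$ has gonality $\ge 4$ for $n\ge 13$, and for smaller $n$ one uses the quotient construction or a different $L$); (ii) $\rk J_1(n)(\Q)=0$, checked via $L$-functions and Kato's theorem as described in the introduction; (iii) if $p=2$, triviality of the $2$-torsion of $J'(\Q)$ — easily avoided by taking $p$ odd; (iv) no elliptic curve over $\F_{p^i}$ has a subgroup isomorphic to $A$ for the relevant residue degrees $i$ (determined by the splitting of $p$ in $K$), which is a finite check using the Hasse bound and the structure of $E(\F_{p^i})$; (v) neither $3\Norm(\fp)$ nor $4\Norm(\fp)$ is divisible by $\#A$. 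The residue degrees occurring are constrained because $K/\Q$ is cyclic cubic of conductor $13$: a rational prime $p\ne 13$ is either inert (residue degree $3$) or totally split (residue degree $1$) according to whether $p$ is or is not a cube modulo $13$, and $13$ is totally ramified.

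The groups with $m=3$, namely $C_3\oplus C_3$ and $C_3\oplus C_6$, require an elliptic curve with full $3$-torsion, which forces $\Q(\zeta_3)\subseteq K$; since $\Q(\zeta_3)$ is a quadratic field and $K$ is cubic, $\Q(\zeta_3)\not\subseteq K$, so these two groups are immediately excluded. This reduces the work to the cyclic candidates $C_n$. For these, the delicate point — and the step I expect to be the main obstacle — is the interplay of conditions i), iv) and v): one must find a single prime $\fp_0$ that simultaneously makes the gonality bound hold with room to spare (so $dh+1\le$ gonality, meaning $h$ must be kept equal to $1$, forcing $\fp_0$ totally inert in $\Q(\zeta_n)$), makes the reduction of $A$ impossible over the residue field $\F_{p^i}$ for the specific residue degree(s) $i$ that arise from the fixed field $K$, and satisfies the congruence condition v). Finding such a prime is essentially a search, but for a few stubborn groups — most likely $C_{13}$ itself (since $13$ ramifies in $K$ and $X_1(13)$ has low gonality) and perhaps $C_{14}$, $C_{15}$, $C_{18}$, $C_{21}$ — one may need to pass to a genuine quotient $\cX'$ of $X_1(n)$ to raise the gonality, or handle the case by an independent argument. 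For $C_{13}$ specifically, I would expect that $X_1(13)$ has gonality too small for the criterion (gonality $8$? actually $X_1(13)$ has gonality $\ge 4$, but the curve has low genus and one must check carefully), and this case, together with any case where the only available prime ramifies, is where a supplementary argument — possibly a direct study of $X_1(13)(K)$ using that $J_1(13)(\Q)$ is finite and the known rational cusps — will be needed to close the proof. Assembling the finitely many successful $(\fp_0,\cX',\pi)$ data and tabulating the verifications yields that none of the extra candidates lies in $\Phi(K)$, so $\Phi(K)$ coincides with Mazur's list.
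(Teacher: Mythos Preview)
Your overall architecture is the same as the paper's: reduce to a finite list via Parent's bound $p\le 13$, then eliminate each extra group case by case, using Theorem~\ref{thm:main} where possible and direct computation otherwise. But two concrete points in your proposal are wrong, and they matter.

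First, your enumeration of candidates is incomplete. Parent bounds only the \emph{primes} dividing the torsion order, not the exponents; so beyond your list one must also rule out the prime-power and mixed cases $C_{16}$, $C_{20}$, $C_{24}$, $C_{25}$, $C_{27}$, $C_{35}$, $C_{49}$ (and $C_2\oplus C_{10}$, $C_2\oplus C_{12}$, which you do mention). The paper's explicit list is
\[
C_n\ (n=11,13,14,15,16,18,20,21,24,25,27,35,49),\qquad C_2\oplus C_{10},\ C_2\oplus C_{12}.
\]
Conversely, your $C_3\oplus C_3$, $C_3\oplus C_6$, $C_2\oplus C_{14}$ are not needed: the first two you correctly dispose of via $\Q(\zeta_3)\not\subseteq K$, and the third is subsumed by $C_{14}$.

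Second, and more seriously, your gonality claim is false and your proposed fix is backwards. You write that $X_1(n)$ has gonality $\ge 4$ for $n\ge 13$; in fact $X_1(n)$ has genus~$1$ for $n\in\{11,14,15\}$ and genus~$2$ for $n\in\{13,16,18\}$, hence gonality~$2$ in all six cases. Since condition~i) of Theorem~\ref{thm:main} requires gonality $\ge dh+1\ge 4$, the criterion \emph{cannot} be applied to any of these. Moreover, ``passing to a genuine quotient $\cX'$ to raise the gonality'' goes the wrong way: a dominant map $X\to X'$ can only \emph{lower} the gonality of $X'$ relative to $X$. What the paper actually does for these low-genus cases is direct Mordell--Weil computation over $K$: for $n\in\{11,14,15\}$ one checks $X_1(n)(K)=X_1(n)(\Q)$ on the elliptic curve; for $n\in\{20,24\}$ and $C_2\oplus C_{2m}$, $m=5,6$, one uses the elliptic curves $X_0(20)$, $X_0(24)$; for $n\in\{13,16,18\}$ one shows $\rk J_1(n)(K)=0$ and pins down $J_1(n)(K)=J_1(n)(\Q)$ by reducing modulo small primes. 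Theorem~\ref{thm:main} is invoked only for $n\in\{21,25,27,35,49\}$, which are precisely the cases where $X_1(n)$ has gonality $\ge 4$.
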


\begin{proof}
By results of Parent \cite{Par1,Par2}, we know that no prime $p>13$ divides the torsion order of an elliptic curve over a cubic field. It therefore remains to prove that $E(K)$ does not contain any of the following groups:
$$
\begin{aligned}
&C_n& \text{where }n &= 11,13,14,15,16,18,20,21,24,25,27,35,49,\\
&C_2 \oplus C_{2m}& \text{where } m &= 5,6.\\
\end{aligned}
$$
For later use, we note that $3$ is totally inert, $5$ is totally split and $13$ is totally ramified in~$K$.
Let $\fp_5$ be one of the primes above~$5$, and let $\fp_{13}$ be the unique prime above~$13$.

\subsection{The cases $C_{11}$, $C_{14}$ and $C_{15}$}

In these cases, the modular curve $X_1(n)$ is an elliptic curve, and an easy computation in Magma shows that $X_1(n)(K)=X_1(n)(\Q)$.  It is well known that $X_1(n)(\Q)$ contains only cusps (see for example \cite{maz1}), hence $Y_1(n)(K)=\emptyset$.

\subsection{The cases $C_{20}$, $C_{24}$, $C_2\oplus C_{10}$ and $C_2\oplus C_{12}$}

Recall that $X_0(20)$ and $X_0(24)$ are elliptic curves. A computation in Magma shows that $X_0(20)(K)=X_0(20)(\Q)$ and $X_0(24)(K)=X_0(24)(\Q)$, and it is known that $X_0(20)(\Q)$ and $X_0(24)(\Q)$ consist purely of cusps (see for example \cite{maz2}).
As an elliptic curve with a point of order~$n$ admits a cyclic isogeny of degree~$n$, it follows that $Y_1(20)(K)=\Y_1(24)(K)=\emptyset$. Similarly, an elliptic curve with torsion $C_2 \oplus C_{2m}$ over~$K$ is $2$-isogenous to a curve with a cyclic $2m$-isogeny, and hence it follows that $Y_1(2,10)(K)=\Y_1(2,12)(K)=\emptyset$.

\subsection{The cases $C_{13}$, $C_{16}$ and $C_{18}$}

The modular curves $X_1(n)$ are all hyperelliptic curves of genus~2. Magma computations show that $\rk J_1(13)(k)=\rk J_1(16)(K)=\rk J_1(18)(K)=0$.

We compute $J_1(13)(\F_5)\simeq C_{19}$, and as $J_1(13)(K)\rightarrowtail J_1(k(\fp_5))=J_1(\F_5)$ and $J_1(\Q)\simeq C_{19}$, it follows that $J_1(13)(K)=J_1(13)(\Q)\simeq C_{19}$. It follows that $Y_1(13)(K)=\emptyset$. A similar argument deals with the case $n=18$.

For $n=16$, it is not enough to use just one prime. We compute $\#J_1(16)(\F_5)=40$, $\#J_1(16)(\F_{27})=1220$  and $J_1(16)(\Q)\simeq C_2 \oplus C_{10}$. Since $\gcd(40,1220)=20$, it follows that $J_1(16)(K)=J_1(16)(\Q)$. We conclude that $Y_1(16)(K)=\emptyset$.

\subsection{The cases $C_n$ for $n=21$, $25$, $27$, $35$, $49$}

We apply Theorem~\ref{thm:main} using $L=\Q$, $\fp_0=(5)$ for $n=21,27,49$ and $\fp_0=(13)$ for $n=25,35$, $X'=X$ and $\pi=\id$.
Conditions i), ii) and iii) are satisfied since in all cases $X_1(n)$ is of gonality $\geq 4$ \cite{jks} and $\rk J_1(n)(\Q)=0$.
Condition iv) holds because of the Hasse bound over $\F_5$ and~$\F_{13}$.
Condition v) clearly holds.
For $n\in\{27,35,49\}$, condition vi) holds because $\fp_0$ is totally inert in $\Q(\zeta_n)$.
For $n=21$, condition vi) holds because $S_{K,(5)}=\{1\}$ and there are no primes of degree~$1$ above~$5$ in $\Q(\zeta_3)$, $\Q(\zeta_7)$ and $\Q(\zeta_{21})^+$.
Finally, for $n=35$, condition vi) holds because $S_{K,(13)}=\{1\}$ and there are no primes of degree~$1$ above~$13$ in $\Q(\zeta_5)$, $\Q(\zeta_7)$ and $\Q(\zeta_{35})^+$.

\medbreak\noindent
Theorem~\ref{tm:cubic} follows by combining the above cases.
\end{proof}

\section{Torsion groups of elliptic curves over $\Q(\zeta_{5})$}

\label{sec:quartic5}

In this section, we will determine $\Phi(K)$, where $K$ is the field $\Q(\zeta_5)$.
We will use the (as of yet unpublished) result that the largest prime dividing the order of a point over a quartic field is $17$ \cite{dkss}, and the fact that there is no $17$-torsion over cyclic quartic extensions of $\Q$ \cite{maz_new}.

\begin{tm}
\label{tm:quartic5}
Let $K=\Q(\zeta_{5})$. Then for every elliptic curve $E$ over~$K$, the torsion group $E(K)_\tors$ is one of the following groups:
\begin{equation}
\label{quart:list}
\begin{aligned}
&C_n & \text{ where }n&=1,\ldots, 10, 12, 15, 16,\\
&C_2\oplus C_{2m} & \text{ where }m&=1,\ldots, 4,\\
&C_5\oplus C_5.
\end{aligned}
\end{equation}
There exist infinitely many elliptic curves with each of the torsion groups from the list~\eqref{quart:list}, except for $C_{15}$ and $C_{16}$.
\end{tm}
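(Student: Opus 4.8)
The plan is to prove separately the \emph{upper bound} --- that every torsion subgroup $E(K)_\tors$ of an elliptic curve over $K=\Q(\zeta_5)$ occurs in the list~\eqref{quart:list} --- and the \emph{realisability} statement. For the upper bound I would first record the splitting behaviour in $K$: the prime $5$ is totally ramified, $11$ splits completely, a rational prime $\ell$ of order $4$ modulo $5$ (for instance $2,3,7,13$) is inert, and a prime $\ell\equiv4\pmod5$ splits into two primes of residue degree $2$. By \cite{dkss}, no prime greater than $17$ divides the order of a torsion point over a quartic field, and by \cite{maz_new} there is no $17$-torsion over the cyclic quartic field $K$; hence $\#E(K)_\tors$ is divisible only by primes in $\{2,3,5,7,11,13\}$. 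Since $K$ contains $\zeta_5$ and $\zeta_{10}=-\zeta_5^3$ but none of $\zeta_3,\zeta_4,\zeta_7,\zeta_{11},\zeta_{13}$, the Weil pairing forces every subgroup $C_m\oplus C_n\subseteq E(K)$ with $m\mid n$ and $m>1$ to satisfy $m\in\{2,5,10\}$; in particular $C_3\oplus C_{3m}$, $C_4\oplus C_4$, $C_6\oplus C_6$, and all $C_m\oplus C_n$ with $m\ge6$ other than $m=10$, are excluded outright. Combined with the prime bound and with the finiteness of $\Phi(4)$, this leaves only a finite, explicit list of groups to exclude: the cyclic $C_n$ for $n$ absent from~\eqref{quart:list} (such as $n=11,13,14,17,18,20,21,22,24,25,27,\ldots$), together with $C_2\oplus C_{2m}$ for $m\ge5$ and $C_5\oplus C_{5m}$ for $m\ge2$; since $C_{10}\oplus C_{10}\supseteq C_5\oplus C_{10}$, excluding $C_5\oplus C_{10}$ also disposes of all groups with $m=10$.

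I would eliminate these groups one at a time, in the style of the proof of Theorem~\ref{tm:cubic}. For the cases of small genus --- $X_1(11)$ and $X_1(14)$ are elliptic curves, $X_1(13)$ and $X_1(18)$ are hyperelliptic of genus $2$, and $X_0(20)$, $X_0(24)$, $X_0(32)$ are elliptic --- I would compute the relevant Mordell--Weil group over $K$, bounding $J(K)$ by reducing modulo several of the inert primes above $2,3,7,13$ and intersecting the resulting bounds, exactly as in the cubic case, and deduce that $X_1(n)(K)$ (respectively $X_0(20)(K)$, $X_0(24)(K)$, $X_0(32)(K)$) consists only of cusps, passing from $X_0$ to $Y_1$ via the isogeny arguments of Theorem~\ref{tm:cubic} to handle $C_{20}$, $C_{24}$, $C_{32}$, $C_2\oplus C_{10}$, $C_2\oplus C_{12}$, $C_2\oplus C_{16}$. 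For the groups of larger genus I would invoke Theorem~\ref{thm:main} (or Corollary~\ref{cor}) with $L=\Q$, $d=4$, and $\fp_0$ chosen among the primes above $11$ (residue field $\F_{11}$, so $S_{K,\fp_0}=\{1\}$, $h=1$ and $A'=A$) or, where needed, among the inert primes: here $X'$ is a suitable nice $(\Q,\fp_0)$-quotient of $X_1(m,n)$ (often $X_1(m,n)$ itself), condition~i) comes from gonality lower bounds for $X_1(m,n)$ as in~\cite{jks}, condition~ii) from a rank-$0$ computation over $\Q$ (unconditional by \cite{kato}), condition~iv) from the Hasse bound, condition~v) from a direct check, and condition~vi) from the inertness of $\fp_0$ in $\Q(\zeta_n)$ or a factorisation of the cusp-field polynomial as in Theorem~\ref{tm:cubic}. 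The groups that truly require the full generality of the criterion are the $C_5\oplus C_{5m}$ ($m\ge2$) not subsumed by a cyclic exclusion --- above all $C_5\oplus C_{10}$ and $C_5\oplus C_{15}$ --- for which one takes $L=\Q(\zeta_5)=K$ (so $d=1$), $\fp_0$ a prime above $11$, and applies the criterion to $X_1(5,5m)$; the essential input is then the rank-$0$ hypothesis~ii) over $\Q(\zeta_5)$.

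For the realisability statement, each of the fifteen groups of Mazur's theorem occurs as $E(\Q)_\tors$ for infinitely many non-isomorphic $E/\Q$ because the corresponding curve $X_1(m,n)$ is rational; a standard Hilbert-irreducibility argument --- the set of such curves whose torsion grows after base change to $K$ lies in the image of finitely many other modular curves and is a thin, co-infinite subset of the parameter line --- shows that $E(K)_\tors$ remains the same group for infinitely many of them, so these fifteen groups lie in $\Phi(K)$ and occur infinitely often. For $C_5\oplus C_5$ one uses that $X(5)=X_1(5,5)$ is isomorphic to $\PP^1$ over $\Q(\zeta_5)$, which yields infinitely many $E/K$ with $E[5]\subseteq E(K)$; by the upper bound $E(K)_\tors$ lies in~\eqref{quart:list}, and since no group on that list properly contains $C_5\oplus C_5$, every such curve has $E(K)_\tors=C_5\oplus C_5$. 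Finally, I would exhibit one explicit $E/K$ with torsion $C_{15}$ and one with torsion $C_{16}$ (so both lie in $\Phi(K)$) and then show these are realised only finitely often: $X_1(15)$ has genus $1$ and $X_1(16)$ has genus $2$, so once $\rk J_1(15)(K)=\rk J_1(16)(K)=0$ is verified, $X_1(15)(K)$ and $X_1(16)(K)$ are finite, whence only finitely many pairs $(E,P)$ --- and a fortiori only finitely many curves $E/K$ --- carry a point of order $15$, respectively $16$.

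The principal obstacle is the verification of hypothesis~ii) (and of hypothesis~iii) when $p=2$) over $L=\Q(\zeta_5)$ rather than over $\Q$: one must prove that the pertinent modular Jacobians --- above all those attached to $C_5\oplus C_{10}$ and $C_5\oplus C_{15}$, and those entering the $C_{15}$ and $C_{16}$ count --- have Mordell--Weil rank $0$ over $\Q(\zeta_5)$, where no $L$-function shortcut of the kind available over $\Q$ is at hand. A secondary difficulty is the bookkeeping needed to reduce the problem to an explicit finite list of groups (confirming in particular that orders such as $32$, $49$, $121$ and $169$ cannot occur over $K$) and, for each remaining group, to select a prime $\fp_0$ and a nice quotient $(\cX,\cX',\pi)$ for which all six conditions hold simultaneously; in some cases (such as $C_{21}$ and $C_{22}$, where the $\Q$-gonality of $X_1(n)$ is only $4$) the naive choice $L=\Q$, $\cX'=\cX$ fails the gonality bound, and one must pass to a quotient of $X_1(n)$ of larger gonality with rank-$0$ Jacobian, or carry out a direct point count over $K$.
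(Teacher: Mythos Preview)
Your overall architecture matches the paper's, but there are several places where your plan diverges or contains gaps worth flagging.

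\textbf{The $C_{15}$ cluster.} The paper does not attack $C_5\oplus C_{15}$ (or $C_{15n}$ for $n>1$) via the criterion at all. Instead it computes $X_1(15)(K)$ directly: $X_1(15)$ is the elliptic curve 15a8, its $K$-points form a cyclic group of order~$16$, eight are cusps, and the other eight give elliptic curves whose torsion is \emph{exactly} $C_{15}$. This single computation simultaneously shows that $C_{15}$ occurs, that it occurs only finitely often, and that no curve over~$K$ has torsion containing $C_{15}$ properly. Your proposed route through Theorem~\ref{thm:main} with $L=K=\Q(\zeta_5)$ for $C_5\oplus C_{15}$ would require $\rk J_1(5,15)(\Q(\zeta_5))=0$, which is exactly the kind of computation you yourself identify as the principal obstacle---and it is unnecessary.

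\textbf{$C_5\oplus C_{10}$.} The paper does not apply the criterion to $X_1(5,10)$ itself but to the quotient $X'=X_0(50)_{\Q(\zeta_5)}$ via the map induced by $\smallmat 5001$. The point is that $J_0(50)$ is small enough that one can verify $\rk J_0(50)(\Q(\zeta_5))=0$ in Magma; the gonality of $X_0(50)$ is $2$, which suffices since $d=1$ and $h=1$.

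\textbf{An actual error.} You write ``pass to a quotient of $X_1(n)$ of larger gonality''. A quotient map is dominant, so the gonality of a quotient is at most that of the source; you cannot increase gonality by taking quotients. For $C_{21}$ the paper instead computes $X_0(21)(K)=X_0(21)(\Q)$, observes that $Y_0(21)(\Q)$ is \emph{non-empty}, and then checks by division polynomials that the curves with a $21$-isogeny have no $K$-rational $21$-torsion. Your ``direct point count'' alternative is the right idea, but you should be aware that the $X_0$-level count does not finish the case by itself here.

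\textbf{Choice of $\fp_0$.} Defaulting to a prime above $11$ does not always work. For $C_{25}$ with $L=\Q$, the cusp field $\Q(\zeta_5)$ has four primes of degree~$1$ over $(11)$, so condition~vi) fails; the paper takes $\fp_0=(2)$ (inert in $\Q(\zeta_{25})$), and then condition~iv) needs the extra check that every elliptic curve over $\F_{16}$ with $25$ points has group $C_5\oplus C_5$ rather than $C_{25}$. Similarly, $C_{35}$ uses $\fp_0=(3)$ and an appeal to Waterhouse for condition~iv) over $\F_{81}$.

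\textbf{Minor points.} For the finiteness of curves with $C_{16}$, Faltings's theorem suffices (genus~$2$); you do not need $\rk J_1(16)(K)=0$. The group $C_{22}$ need not be listed separately, since a point of order~$22$ yields one of order~$11$. For $C_{32}$ and $C_2\oplus C_{16}$ the paper uses Theorem~\ref{thm:main} with $\fp_0=(11)$ rather than $X_0(32)$; your $X_0(32)$ route can be made to work, but you must also verify that $X_0(32)(K)=X_0(32)(\Q)$ and handle the isogeny descent for $C_2\oplus C_{16}$ carefully.
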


\begin{proof}

As mentioned at the beginning of this section, we need only consider primes $p\leq 13$ as possible divisors of the order of a torsion point.

Before proceeding further, we find all elliptic curves over~$K$ containing a point of order~15 and show that the torsion subgroup of these curves is exactly $C_{15}$.  Recall that $X_1(15)$ is isomorphic to the elliptic curve with Cremona label 15a8. We compute
that the group of $K$-points of this elliptic curve is isomorphic to~$C_{16}$. Of the 16 points, 8 are cusps, and we compute that the remaining 8 points correspond to elliptic curves over~$K$ with torsion subgroup~$C_{15}$.  In particular, there exist no curves with torsion $C_{15n}$ for any integer $n>1$ and no curves with torsion $C_5 \oplus C_{15}$.

There exist elliptic curves with points of order $16$ over~$K$; one such example is the elliptic curve
$$y^2 + (6\zeta_5^3 + \zeta_5^2 + 3\zeta_5 + 6)xy + (-10\zeta_5^3 - 2\zeta_5^2 -
    5\zeta_5 - 8)y = x^3 + (-10\zeta_5^3 - 2\zeta_5^2 - 5\zeta_5 - 8)x^2,$$
on which $(0,0)$ is a point of order $16$. Since $X_1(16)$ has genus~2, the set $X_1(16)(K)$ is finite by Faltings's theorem.

It remains to prove that $E(K)$ does not have a subgroup isomorphic to one of the following groups:
$$
\begin{aligned}
&C_n & \text{where }n&=11,13,14,17,18,20,21,24,25,27,32,35,49,\\
&C_2\oplus C_{2m} & \text{where } m&=5,6,8,\\
&C_5\oplus C_{10}.
\end{aligned}
$$

\subsection{The cases $C_n$ for $n=11$, $14$, $20$, $49$ and $C_2\oplus C_{2m}$ for $m=5$, $6$}

These cases are dealt with by simply computing $X_1(n)(K)=X_1(n)(\Q)$, for $n=11$, $14$, $X_0(20)(K)=X_0(20)(\Q)$ for $C_{20}$ and $C_2\oplus C_{10}$, $X_0(24)(K)=X_0(24)(\Q)$ for $C_{24}$ and $C_2\oplus C_{12}$ as in the proof of Theorem~\ref{tm:cubic}. Similarly, noting that $X_0(49)$ is an elliptic curve, we compute $X_0(49)(K)=X_0(49)(\Q)$, which consists only of cusps.

\subsection{The case $C_{21}$}

The curve $X_0(21)$ is an elliptic curve and we compute that $X_0(21)(K)=X_0(21)(\Q)$. However, the difference between this case and the previous ones is that $Y_0(21)(K)$ is not empty and hence one needs also to check that the elliptic curves with 21-isogenies do not have any $K$-rational points over~$K$. This can be done by using division polynomials; see \cite{naj3,naj4} for details.

\subsection{The cases $C_{13}$ and $C_{18}$}

These cases are dealt with exactly as in the proof of Theorem~\ref{tm:cubic}.

\subsection{The cases $C_{27}$, $C_{32}$ and $C_2 \oplus C_{16}$}

We apply Theorem~\ref{thm:main} with $L=\Q$, $\fp_0=(11)$ (which is totally split in~$K$), $X'=X$ and $\pi=\id$.
The curves $X_1(27)$, $X_1(32)$ and $X_1(2,16)$ all have gonality $\ge5$ by \cite[Theorem~2.6]{jkp-quartic} (see also \cite{dvh}), and their Jacobians all have rank~0.
This implies conditions i), ii) and iii).
Condition iv) follows from the Hasse bound over $\F_{11}$, and condition v) clearly holds.
Finally, condition vi) holds in the case $C_{27}$ because $11$ is totally inert in $\Q(\zeta_{27})$, and in the cases $C_{32}$ and $C_2\oplus C_{16}$ because $S_{K,11}=\{1\}$ and there are no primes of degree~$1$ above~$11$ in the fields $\Q(\zeta_4)$, $\Q(\zeta_8)$, $\Q(\zeta_{16})^+$ and $\Q(\zeta_{32})^+$.

\subsection{The case $C_{25}$}

We apply Theorem~\ref{thm:main} with $L=\Q$, $\fp_0=(2)$ (which is totally inert in~$K$), $X'=X$ and $\pi=\id$.  The modular curve $X_1(25)$ has
gonality at least 5 \cite[Theorem~2.6]{jkp-quartic}, and $J_1(25)(\Q)$ has rank~0 \cite{Kubert} and trivial $2$-torsion; this implies conditions i), ii) and iii).
Although $25$ is in the Hasse interval of~$\F_{16}$, a search among all elliptic curves over~$\F_{16}$ shows that all such curves $E$ with 25 points satisfy $E(\F_{16})\simeq C_5\oplus C_5$. This shows that condition iv) holds.
Condition~v) clearly holds.
Finally, condition vi) is satisfied because $2$ is totally inert in $\Q(\zeta_{25})$.

\subsection{The case $C_{35}$}

We apply Theorem~\ref{thm:main} with $L=\Q$, $\fp_0=(3)$ (which is totally inert in~$K$), $X'=X$ and $\pi=\id$.
The curve $X_1(35)$ has gonality at least 5 \cite[Theorem~2.6]{jkp-quartic}, and $J_1(35)(\Q)$ has rank~0; this shows that conditions i) and ii) hold.
One easily checks conditions iii) and v).
By \cite[Theorem 4.1]{wat1}, there are no elliptic curves with order a multiple of 35 over $k(\fp)=\F_{81}$; this implies condition~iv).
The prime $3$ is totally inert in $\Q(\zeta_5)$, $\Q(\zeta_7)$ and $\Q(\zeta_{35})^+$, which implies condition vi).

\subsection{The case $C_{5} \oplus C_{10}$}

We apply Theorem~\ref{thm:main} with $L=\Q(\zeta_5)$, $\fp_0$ one of the primes above~$11$ (which is totally split in~$L$), $X'=\X_0(50)_L$ and $\pi$ the map defined by the inclusion $\alpha^{-1}\Gamma_1(5,10)\alpha \subset \Gamma_0(50)$, where $\alpha=\smallmat 5001$.
The gonality of~$X'$ is $2$, and we compute in Magma that $J_0(50)(K)$ has rank~$0$; this implies conditions i) and ii).
One easily checks conditions iii) and v).
Condition iv) follows from the Hasse bound over $k(\fp_0)=\F_{11}$.
Finally, condition vi) follows from the fact that all cusps of $X'$ are defined over~$L$.

\medbreak\noindent
Theorem~\ref{tm:quartic5} follows by combining the above cases.
\end{proof}

\section{Results for all cubic fields}

\label{sec:cubicgeneral}

We now apply the results of Section~\ref{sec:thm} to prove that certain groups are not in $\Phi(3)$. We note that the cases $C_{40}$, $C_{49}$ and $C_{55}$ (and more) were also proved independently by Wang \cite{wang}.

\begin{tm}
\label{tm:cubic-general}
The groups $C_2\oplus C_{20}$, $C_{40}$, $C_{49}$ and $C_{55}$ do not occur as subgroups of elliptic curves over cubic fields.
\end{tm}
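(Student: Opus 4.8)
The plan is to eliminate each of the four groups $C_2\oplus C_{20}$, $C_{40}$, $C_{49}$, $C_{55}$ by applying Corollary~\ref{cor} with $L=\Q$ and a judiciously chosen prime $\fp_0=(p)$, taking $X'=X=X_1(m,n)$ and $\pi=\id$ in the simple setting described after the corollary (so that $A'=A$ when $p>d+1=4$, and $h=1$). For $C_{40}$, $C_{49}$ and $C_{55}$ one works with $X_1(n)$ for $n=40,49,55$; for $C_2\oplus C_{20}$ one works with $X_1(2,20)$. The structural hypotheses i)--iii) are verified using known gonality lower bounds for these modular curves together with Magma computations showing that the relevant Jacobians $J_1(n)(\Q)$ (resp.\ $J_1(2,20)(\Q)$) have rank~$0$, plus, when $p=2$ is chosen, triviality of the $2$-torsion. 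One then checks the arithmetic conditions iv), v) and vi): iv) amounts to showing no elliptic curve over $\F_{q^i}$, for $i$ in the relevant set $S_{\fp_0}$, has a subgroup isomorphic to $A$, which for most choices of $p$ follows from the Hasse bound (and otherwise from a finite search or from Waterhouse-type results on group structures of $E(\F_{q^i})$); v) is immediate; and vi) is arranged by picking $\fp_0$ totally inert in $\Q(\zeta_n)$ whenever possible, or otherwise by the factorization-of-cusp-fields argument (computing defining polynomials of the fields $F_{m,n,r}$ over $\Q$ and checking they have no prime of the forbidden residue degree over~$\fp_0$), exactly as in the proofs of Theorems~\ref{tm:cubic} and~\ref{tm:quartic5}.

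Concretely I would first handle $C_{49}$ and $C_{55}$, where $49$ and $55$ are not too small: here a prime like $\fp_0=(3)$ or $\fp_0=(5)$ is totally inert (or has the right splitting) in the cyclotomic field, so vi) is free, and the Hasse bound over $\F_{3^i}$ or $\F_{5^i}$ (for the few $i\in S_{\fp_0}$, which for $d=3$ are divisors of $1,2,3$) forces iv) since $49,55>(1+\sqrt{q})^2$ for the small residue fields in play; gonality of $X_1(49)$ and $X_1(55)$ is $\ge4$ by the results cited earlier, and rank~$0$ of the Jacobians is a Magma computation (justified unconditionally by Kato's theorem as noted in the introduction). For $C_{40}$ and $C_2\oplus C_{20}$ the order $40$ is small enough that one must be slightly careful about the Hasse interval: I would choose $\fp_0$ so that the residue fields $\F_{q^i}$ arising are large enough that $40$ lies outside the Hasse interval, or, failing that, run the finite search over all elliptic curves over the relevant $\F_{q^i}$ to confirm none has a point of order~$40$ (resp.\ full $C_2\oplus C_{20}$), and I would select the prime to be totally inert in $\Q(\zeta_{40})$, resp.\ to make $S_{\fp_0}=\{1\}$ with no degree-one primes over $\fp_0$ in the cusp fields $F_{2,20,r}$, exactly paralleling the $C_{21}$ and $C_{35}$ subcases above.

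The main obstacle I anticipate is condition i), the gonality bound: Corollary~\ref{cor} with $d=3$ requires the $\Q$-gonality of $X'=X_1(n)$ (or $X_1(2,20)$) to be at least $dh+1=4$. For $n=49,55$ and for $X_1(2,20)$ this is available from the literature (e.g.\ the gonality lower bounds used in \cite{jks} and the genus/degree estimates for modular curves), but for $X_1(40)$ one needs to be sure the gonality is genuinely $\ge4$ rather than $2$ or $3$; if a direct citation is not at hand I would deduce it from the Abramovich/Derickx--van Hoeij type lower bounds on gonality of $X_1(n)$ in terms of the index $[\SL_2(\Z):\Gamma_1(n)]$, which comfortably exceeds what a gonality-$3$ curve could support for $n=40$. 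A secondary, purely computational obstacle is verifying rank~$0$ of $J_1(2,20)(\Q)$ and $J_1(40)(\Q)$ over $\Q$; these are larger-dimensional Jacobians, but the rank computation is the same kind of $L$-value computation (decomposing into modular abelian variety factors and checking non-vanishing of $L$-functions) already invoked elsewhere in the paper, and is unconditional by Kato. Once i)--vi) are in place for each of the four groups, Corollary~\ref{cor} yields that none of $C_2\oplus C_{20}$, $C_{40}$, $C_{49}$, $C_{55}$ occurs as a subgroup of $E(K)$ for any cubic field $K$, which is the assertion of Theorem~\ref{tm:cubic-general}.
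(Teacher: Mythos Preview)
Your plan is essentially the paper's proof: apply Corollary~\ref{cor} with $L=\Q$, $X'=X$, $\pi=\id$, and verify i)--vi) for each group. The paper carries this out uniformly with $\fp_0=(3)$ for the three groups $C_2\oplus C_{20}$, $C_{49}$, $C_{55}$, noting that $S_{\fp_0}=\{1,2,3\}$ and that $40$, $49$, $55$ all lie outside the Hasse intervals of $\F_3$, $\F_9$, $\F_{27}$; condition vi) is checked via the cusp fields $\Q(\zeta_5)$, $\Q(\zeta_7)$, $\Q(\zeta_{11})$ and $\Q(\zeta_n)^+$ for $n\in\{20,49,55\}$, none of which have primes of degree $1$, $2$ or $3$ over~$3$.

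The one place where the paper is cleaner than your outline is $C_{40}$. You propose to run the criterion directly on $X_1(40)$, which forces you to worry about the gonality of $X_1(40)$ and the rank of $J_1(40)(\Q)$ as separate computational inputs. The paper instead observes that the natural covering $X_1(40)\to X_1(2,20)$ (sending $(E,P)$ with $P$ of order~$40$ to the quotient by $\langle 20P\rangle$, which acquires full $2$-torsion together with a point of order~$20$) makes the $C_{40}$ case an immediate consequence of the $C_2\oplus C_{20}$ case already established. This saves the extra gonality/rank verification entirely. Your approach would still go through, but this shortcut is worth knowing.

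A minor remark: your parenthetical ``so that $A'=A$ when $p>d+1=4$'' is slightly off-target, since with $\fp_0=(3)$ one has $p=3\le d+1$; but this causes no trouble because $3\nmid\#A$ for each of the groups in question, so the maximal $3$-divisible subgroup is all of~$A$ anyway.
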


\begin{proof}
To prove the cases $C_2\oplus C_{20}$, $C_{49}$ and $C_{55}$, we apply Corollary~\ref{cor} with $L=\Q$, $\fp_0=(3)$, $X'=X$ and $\pi=\id$.
We have $S_{\fp_0}=\{1,2,3\}$.
Conditions i) and ii) hold because $X_1(2,20)$, $X_1(49)$ and $X_1(55)$ have gonality at least 4 and their Jacobians have rank~0 over~$\Q$.
Condition iv) holds because $40$, $49$ and $55$ are all outside the Hasse intervals of $\F_3$, $\F_9$ and $\F_{27}$.
Condition iii) and v) clearly hold.
Condition vi) follows from the fact that there are no primes of degree 1, 2 or~3 above~3 in the fields $\Q(\zeta_n)$ for $n\in\{5,7,11\}$ and $\Q(\zeta_n)^+$ for $n\in\{20,49,55\}$.
Finally, the case $C_{40}$ follows from the case $C_2\oplus C_{20}$ in view of the covering $X_1(40)\to X_1(2,20)$.
\end{proof}

\section{Results for quartic fields}
\label{sec:quarticgeneral}

In this section, we show that certain groups of the form $C_m\oplus C_n$, with $m\mid n$ and $m\geq 3$, are not in $\Phi(4)$. Recall that an elliptic curve $E$ with a subgroup isomorphic to $C_m \oplus C_n$ has to be defined over a field containing $\Q(\zeta_m)$.

\begin{tm}
\label{tm:quartic-general}
The following groups do not occur as subgroups of elliptic curves over quartic fields:
$$
\begin{aligned}
C_3\oplus C_{12},\quad C_3\oplus C_{18},\quad C_3\oplus C_{27},\quad
C_3\oplus C_{33},\quad C_3\oplus C_{39},\quad C_4\oplus C_{12},\\
C_4\oplus C_{16},\quad C_4\oplus C_{28},\quad C_4\oplus C_{44},\quad
C_4\oplus C_{52},\quad C_4\oplus C_{68},\quad C_8\oplus C_{8\phantom{0}}.
\end{aligned}
$$
\end{tm}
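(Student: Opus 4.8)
The plan is to dispatch each of the twelve groups $A = C_m \oplus C_n$ in the list by an application of Corollary~\ref{cor} with $d = 4$, taking $L = \Q(\zeta_m)$ in every case (so $L = \Q(\zeta_3)$ for the six groups with $m = 3$, and $L = \Q(\zeta_4) = \Q(i)$ for the six with $m = 4$ or $m = 8$), choosing in each case $X' = X = X_1(m,n)_{\Q(\zeta_m)}$ viewed over $L$, with $\pi = \id$ and $h = 1$ — this is exactly the ``important special case'' discussed after Corollary~\ref{cor}, so a nice $(L,\fp_0)$-quotient is automatically available provided $\fp_0 \nmid n$. For each group I would select a small prime $\fp_0$ of~$L$ of residue characteristic $p$ not dividing~$n$; since $[K:L] = 2$ for extensions $K/\Q$ of degree~$4$ containing $\Q(\zeta_m)$, the set $S_{\fp_0}$ is contained in $\{1,2\}$, and $q = \Norm(\fp_0)$ is a prime power. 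One wants $p$ chosen so that $p > d+1 = 5$ whenever possible, forcing $A' = A$ (otherwise $A'$ is the maximal $p$-divisible subgroup, which for these groups is typically trivial or small and makes condition~iv) easier, not harder).

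The substantive verifications, case by case, are: (i) the gonality of $X_1(m,n)$ over~$L$ is at least $dh + 1 = 5$ — here I would invoke the gonality lower bounds for the relevant $X_1(m,n)$ from \cite{jkp-quartic} (and the auxiliary references \cite{dvh}, \cite{dkss}) exactly as was done for $C_{27}$, $C_{32}$, $C_2\oplus C_{16}$ in Section~\ref{sec:quartic5}; (ii)--(iii) the rank of $J' = \Jac(X_1(m,n))$ over~$L$ is~$0$ and, if $p = 2$, the $2$-torsion is trivial — these are Magma computations of the type already performed elsewhere in the paper, using the $L$-function criterion over~$\Q$ for the $\Q$-rational case and a finite-field injection argument over~$L$ when $L \ne \Q$; (iv) for each $i \in S_{\fp_0}$ there is no elliptic curve over $\F_{q^i}$ with a subgroup isomorphic to $A'$ — since $\#A' = mn$ is fairly large (up to $m n = 3\cdot 39 = 117$ or $4\cdot 68 = 272$ or $8\cdot 8 = 64$) while $q^i$ is small, this follows from the Hasse bound $\#E(\F_{q^i}) \le q^i + 1 + 2\sqrt{q^i}$ once $q$ is chosen so that no multiple of $\#A'$ lies in the Hasse interval for $i = 1$ and $i = 2$; (v) neither $3q^d$ nor $4q^d$ is a multiple of $\#A'$, which is immediate once $q$ is prime to~$m$; and (vi) the cusp field condition, which I would arrange by choosing $\fp_0$ so that it is totally inert in $\Q(\zeta_n)$ (or at least so that none of the cusp fields $F_{m,n,r}$, described explicitly after Corollary~\ref{cor}, have a prime over $\fp_0$ of residue degree~$1$ or~$2$ unless that prime is unique), using the displayed description of $F_{m,n,r}$ as the fixed field of $H_{m,n,r} \subseteq (\Z/n\Z)^\times$ and factoring its defining polynomial modulo~$\fp_0$.

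Concretely, for the $m = 3$ family one expects a prime above a rational prime $p \equiv 2 \pmod 3$ (so $\fp_0$ has residue degree $2$ over~$\Q$ and $q = p^2$) to work for $C_3\oplus C_{12}$, $C_3\oplus C_{18}$, $C_3\oplus C_{27}$, while for $C_3\oplus C_{33}$ and $C_3\oplus C_{39}$ one needs $\fp_0$ coprime to $11$ resp.\ $13$ and inert in the relevant $\Q(\zeta_n)$; for the $m = 4$ and $m = 8$ family a prime above $p \equiv 3 \pmod 4$ in $\Q(i)$ similarly gives residue degree~$2$. For the larger groups $C_4\oplus C_{44}$, $C_4\oplus C_{52}$, $C_4\oplus C_{68}$ and $C_8\oplus C_8$ the gonality of $X_1(m,n)$ is comfortably above~$5$, so condition~i) is not the issue; the genuine work is the rank-$0$ verification of $J_1(m,n)$ over a quadratic or quartic cyclotomic field, since $J_1(m,n)$ is high-dimensional and one must either decompose it into quotients with provably vanishing $L$-value (Kato's theorem, as in the introduction, applies only to the $\Q$-rational part) or push through a torsion-injection argument over~$L$ — \textbf{this base-change rank computation is the main obstacle}, together with confirming condition~iv) for the handful of groups (such as $C_8\oplus C_8$, where $\#A' = 64$) whose order, for the most convenient small $\fp_0$, may fall inside a Hasse interval and so requires an explicit enumeration of elliptic curves over the relevant small field, exactly as was needed for $C_{25}$ over~$\F_{16}$ in Section~\ref{sec:quartic5}. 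Once all six conditions are checked for a suitable $\fp_0$ in each case, Corollary~\ref{cor} yields that no elliptic curve over a degree-$4$ extension of~$L$ — hence none over a quartic field, since such a curve carrying $C_m\oplus C_n$ must be defined over a field containing $\Q(\zeta_m) = L$ — has a subgroup isomorphic to~$A$, completing the proof.
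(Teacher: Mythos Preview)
Your outline matches the paper's proof only for $C_3\oplus C_{12}$, $C_3\oplus C_{18}$, and $C_4\oplus C_{12}$; for the other nine groups the paper does something substantially different, and in at least two cases your uniform plan would not go through as stated.  First, a bookkeeping slip: with $L=\Q(\zeta_m)$ quadratic and $K$ quartic, the parameter $d$ in Corollary~\ref{cor} is $[K:L]=2$, not $4$; this makes the gonality bound $dh+1=3$ and the condition for $A'=A$ become $p>3$, so your numerics are off (harmlessly so, since both conditions get easier).

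More seriously, the paper does \emph{not} always take $L=\Q(\zeta_m)$ and $X'=X_1(m,n)$.  For $C_3\oplus C_{27}$ and for $C_4\oplus C_{n}$ with $n\in\{16,28,44,52,68\}$ it takes $L=\Q$ and $X'$ a much smaller quotient---$X_1(27)$, $X_1(2,16)$, $X_1(28)$, $X_1(44)$, $X_1(26)$, $X_1(34)$ respectively---precisely so that the rank-$0$ verification can be done over~$\Q$ via Kato's theorem and an $L$-value computation.  Your plan would instead require showing that $J_1(4,44)$, $J_1(4,52)$, $J_1(4,68)$, etc., have rank~$0$ over $\Q(i)$; these Jacobians are very high-dimensional and that computation is neither carried out in your sketch nor obviously within reach.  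For $C_8\oplus C_8$ the paper avoids Corollary~\ref{cor} entirely: since the only possible $K$ is $\Q(\zeta_8)$, one simply computes that the elliptic curve $X_1(4,8)$ has only cuspidal $\Q(\zeta_8)$-points.

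The genuine gap is at $C_3\oplus C_{33}$ and $C_3\oplus C_{39}$.  The paper explicitly abandons Corollary~\ref{cor} here and runs a separate Kamienny-style argument (\cite{Kam2}): it passes to the hyperelliptic quotients $X_0(33)$ and $X_0(39)/w_{13}$, uses rank~$0$ of their (low-dimensional) Jacobians over $\Q(\zeta_3)$, and derives a contradiction from the fact that the hyperelliptic involution ($w_{11}$, resp.\ $w_3$) does not fix the cusp at infinity.  These quotients have gonality~$2$, so condition~i) of the Corollary fails for them, while the full $J_1(3,33)$ and $J_1(3,39)$ are large enough that the rank-$0$ step in your plan is not something one can wave through as ``a Magma computation''.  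You correctly flagged the base-change rank computation as the main obstacle; the paper's point is that one must choose $L$ and $X'$ case by case to dodge it, and in two cases replace the Corollary by a different argument altogether.
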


\begin{proof}

We consider each of the above cases separately.



\subsection{The case $C_3\oplus C_{12}$}

The curve $X=X_1(3,12)$ has genus~3 and is non-hyperelliptic \cite{im}.  We apply Corollary~\ref{cor} with $L=\Q(\zeta_3)$, $\fp_0$ one of the primes of norm~$7$ in~$L$, $X'=X$ and $\pi=\id$.  We compute that the Jacobian of~$X$ has rank~0 over $\Q(\zeta_3)$ (see \cite[proof of Lemma~4.4]{jk-biel} for details).  This shows that conditions i), ii) and~iii) are satisfied.
For all elliptic curves over fields of 49 elements with 36 points, the group of points is isomorphic to $C_6\oplus C_6$, proving iv).
Condition~v) clearly holds.
Finally, condition vi) holds because $\fp_0$ is inert in $\Q(\zeta_{12})$.

\subsection{The case $C_3\oplus C_{18}$}

The curve $X=X_1(3,18)$ has genus~10, and its gonality over~$\C$ is at least 4 by the results of \cite{abr} and \cite[Appendix~2]{kim} (see also \cite[Theorem~1.2]{jkp-quartic}).
We use Corollary~\ref{cor}, choosing $L=\Q(\zeta_3)$, $\fp_0=(2)$, $X'=X$ and $\pi=\id$.
The Jacobian $J=J_1(3,18)$ over $\Q(\zeta_3)$ decomposes up to isogeny as
$$
J\sim \bigoplus_{i=1}^7 B_i,
$$
where $B_i$ is an elliptic curve for $1\le i\le 4$ and $B_i$ is an Abelian surface for $5\le i\le 7$.  A number of 2-descent and $L$-series computations in Magma shows that the rank of all these $B_i$ is 0.  This shows that condition ii) is satisfied.
If $\fq$ is one of the primes of norm~$7$ in~$L$, then $J(k(\fq))$ has order $3^{14}\cdot 7^3$.  This implies that the 2-torsion of $J(L)$ is trivial, so condition iii) holds.
The Hasse bound implies condition iv).  Condition v) clearly holds, and condition vi) holds because 2 is totally inert in $\Q(\zeta_{18})=\Q(\zeta_9)$.


\subsection{The case $C_3\oplus C_{27}$}

We use Corollary \ref{cor}, taking $L=\Q$, $\fp_0=(2)$, $X'=X_1(27)$ and $\pi\colon X\to X'$ the canonical map.
Condition i) holds because $X'$ has gonality~6 by \cite{dvh}.
A computation using $L$-functions shows that $\rk J'(\Q)=0$, which implies condition ii).
Condition iii) follows from $\#J'(\F_7)=242518973481$.
Condition iv) and~v) clearly hold.
Condition vi) holds since $2$ is inert in $\Q(\zeta_{27})$.




\subsection{The cases $C_3\oplus C_{33}$ and $C_3\oplus C_{39}$}

These cases require a slightly different approach, following the lines of~\cite{Kam2}.
Let $K$ be a quadratic extension of $\Q(\zeta_3)$, let $\sigma$ be the non-trivial element of $\Gal(K/\Q(\zeta_3))$, and let $\fp$ be a prime of~$K$ above~7.

We first decribe the case $C_3\oplus C_{33}$.
We take the hyperelliptic curve $X'=X_0(33)$ of genus 3, with hyperelliptic involution $w_{11}$. Suppose that $y$ is a non-cuspidal point on $X_1(3,33)$.
By Lemma \ref{lemma:mult}, $y$ maps to the cusp at $\infty$ mod~$\fp$, and $y^\sigma$ maps to $\infty$ mod~$\fp$.
The points $y$ and $y^\sigma$ on $X_1(3,33)$ map to $x$ and $x^\sigma$ on~$X'$, which likewise map to $\infty$ mod~$\fp$.
Consider the map
$$
\begin{aligned}
f\colon X'&\rightarrow J'\\
t&\mapsto[t+t^\sigma-2\infty].
\end{aligned}
$$
Then $f(x)$ is $\Q(\zeta_3)$-rational, and $f(x)$ mod~$\fp$ is $0$.
We compute that $J'(\Q(\zeta_3))$ is finite.
Since reduction modulo~$\fp$ is injective on the torsion, it follows that $f(x)=0$ over $\Q(\zeta_3)$, so there is a function~$g$ whose divisor is $x+x^\sigma-2\infty$.
Since $g$ has degree~2, it is fixed by the hyperelliptic involution.
It follows that $\infty$ is fixed by the hyperelliptic involution.
But $w_{11}$ acts freely on the cusps of $X'$, leading to a contradiction.

We now deal with the case $C_3\oplus C_{39}$.  We take
$$
X'=X_0(39)/w_{13}\colon y^2 = x^6 - 20x^4 - 6x^3 + 64x^2 - 48x + 9.
$$
The curve $X'$ is hyperelliptic of genus 2, and the hyperelliptic involution on $X'$ is induced by~$w_3$.
We compute that $J'(\Q(\zeta_3))$ is finite.
Using the same arguments as above, we conclude that $w_3$ fixes the cusp at $\infty$ of $X'$, but $w_3$ acts by switching the two cusps $0$ and $\infty$, which leads to a contradiction.

\subsection{The case $C_4\oplus C_{12}$}

We apply Corollary~\ref{cor} with $L=\Q(i)$, $\fp_0=(2+i)$, $X'=X=X_1(4,12)$ and $\pi=\id$.
The curve $X$ has genus~5 and is non-hyperelliptic \cite{im}.
It is the base change of the curve $X_{\Delta'}(48)$ over~$\Q$, where $\Delta'$ is the subgroup $\{\pm1,\pm13,\pm25,\pm37\}$ of $(\Z/48\Z)^\times$.
By \cite[pages 464--465]{jk-biel}, the Jacobian $J_{\Delta'}$ of~$X_{\Delta'}$ decomposes over~$\Q$ as
$$
J_{\Delta'}\sim B_1^2\oplus B_2\oplus B_3,
$$
where
$$
\begin{aligned}
B_1\colon y^2&=x^3-x^2-4x+4,\\
B_2\colon y^2&=x^3+x^2-4x-4
\end{aligned}
$$
and $B_3$ is the Jacobian of the curve
$$
C_3\colon y^2=x^5-10x^3+9x.
$$
Note that $B_2$ is a $-1$-twist of $B_1$, and hence $B_1$ and $B_2$ are isomorphic over $\Q(i)$.
A computation in Magma shows that
$$\rk B_1(\Q(i))=\rk B_2(\Q(i))= \rk B_3(\Q(i))=0,$$
so condition ii) is satisfied.
Condition iii) holds trivially.
The Hasse bound implies condition iv).
Condition v) is clearly satisfied.
Finally, condition vi) holds because $\fp_0$ is totally inert in $\Q(\zeta_{12})$.


\subsection{The case $C_4\oplus C_{16}$}

We take the genus 5 curve $X'=X_1(2,16)$ of gonality $4$ \cite[Theorem 2.8.]{jkp-quartic}, $L=\Q(i)$ and $\fp_0=(2+i)\Z[i]$. The Jacobian $J'$ of $X'$ factors over~$\Q$ as
$$J'\sim E_1 \oplus J_1(16)^2,$$
where $E_1$ is the elliptic curve over $\Q$ with Cremona label 32a1.
A computation using $2$-descent shows that $\rk J'(\Q(i))=0$ and $\fp_0$ is inert in $\Q(\zeta_{16})$. As all the assumptions of Corollary $\ref{cor}$ are satisfied, the result follows.

\subsection{The case $C_4\oplus C_{28}$}

We take $X'=X_1(28)$ of gonality 6 \cite{dvh}, $L=\Q$ and $\fp_0=(3)$.
We compute $\rk J'(\Q)=0$, showing that condition ii) holds.
Condition vi) is satisfied because $S_{K,\fp_0}=\{1,2,4\}$, while 3 is inert in~$\Q(i)$ and none of the fields $\Q(\zeta_7)$, $\Q(\zeta_{14})^+$ and $\Q(\zeta_{28})^+$ have any primes of degree 1, 2 or~4 above~3.

\subsection{The case $C_4\oplus C_{44}$}

We take $X'=X_1(44)$ of gonality $\geq 7$ \cite{abr}, $L=\Q$ and $\fp_0=(3)$.
Condition vi) is satisfied because $S_{K,\fp_0}=\{1,2,4\}$, while 3 is inert in $\Q(i)$ and none of the fields $\Q(\zeta_{11})$, $\Q(\zeta_{22})^+$ and $\Q(\zeta_{44})^+$ have any primes of degree 1, 2 or~4 above~3.
After computing $\rk J'(\Q)=0$, we are done.   

\subsection{The case $C_4\oplus C_{52}$}

We take $X'=X_1(26)$ of gonality 6 \cite{dvh}, $L=\Q$ and $\fp_0=(3)$. We check that 3 splits into 4 primes of degree~3 in $\Q(\zeta_{13})$, all of which remain inert in $\Q(\zeta_{52})$.  As $(3)$ is inert in $\Q(i)$, the primes of any quartic field containing $\Q(i)$ above it are of degree $2$ or $4$, this proves vi). We compute $\rk J'(\Q)=0$ and $\#J'(\F_5)=3^2 5^2 7^3 19^2$, proving that all the assumptions are satisfied. 

\subsection{The case $C_4\oplus C_{68}$}

We use Corollary \ref{cor}, taking $X'=X_1(34)$ of gonality~10 \cite{dvh}, $L=\Q$ and $\fp_0=(3)$. Since $3$ splits into 2 primes of degree~$16$ over $\Q(\zeta_{68})$ and is inert in $\Q(\zeta_{17})$ and $\Q(i)$, it easily follows that condition vi) is satisfied. We compute $\rk J'(\Q)=0$, completing the proof.




\subsection{The case $C_8\oplus C_8$}

The only field over which there could exist an elliptic curve with full $8$-torsion is $\Q(\zeta_8)=\Q(i, \sqrt 2)$.  To show that such curves do not exist, we will in fact prove the stronger statement that there does not exist an elliptic curve over $\Q(\zeta_8)$ with a subgroup isomorphic to $C_4 \oplus C_8$.
To prove this, we note that the modular curve $X_1(4,8)$ is isomorphic (over $\Q(i)$) to the elliptic curve with Cremona label 32a2 \cite[Lemma~13]{naj}. We compute
$$X_1(4,8)(\Q(\zeta_8))\simeq C_4 \oplus C_4,$$
and all the points are cusps, which proves our claim.

\medbreak\noindent
This finishes the proof of Theorem~\ref{tm:quartic-general}.
\end{proof}

\begin{acknowledgments}
We would like to thank Maarten Derickx for useful discussions on the topic of this paper.
\end{acknowledgments}

\end{document}